\theoremstyle{plain}
\newtheorem{thm}{Theorem}[section]
\newtheorem{lemma}[thm]{Lemma}
\newtheorem{prop}[thm]{Proposition}
\newtheorem{cor}[thm]{Corollary}
\theoremstyle{definition}
\theoremstyle{remark}
\newtheorem*{thank}{{\bf Acknowledgments}}
\newcommand{\nc}{\newcommand}
\def\makeop#1{\expandafter\def\csname#1\endcsname
  {\mathop{\rm #1}\nolimits}\ignorespaces}
\def\makebb#1{\expandafter\def
  \csname bb#1\endcsname{{\mathbb{#1}}}\ignorespaces}
\def\makebf#1{\expandafter\def\csname bf#1\endcsname{{\bf
      #1}}\ignorespaces} 
\def\makegr#1{\expandafter\def
  \csname gr#1\endcsname{{\mathfrak{#1}}}\ignorespaces}
\def\makescr#1{\expandafter\def
  \csname scr#1\endcsname{{\EuScript{#1}}}\ignorespaces}
\def\makecal#1{\expandafter\def\csname cal#1\endcsname{{\mathcal
      #1}}\ignorespaces} 
\def\doLetters#1{#1A #1B #1C #1D #1E #1F #1G #1H #1I #1J #1K #1L #1M
                 #1N #1O #1P #1Q #1R #1S #1T #1U #1V #1W #1X #1Y #1Z}
\def\doletters#1{#1a #1b #1c #1d #1e #1f #1g #1h #1i #1j #1k #1l #1m
                 #1n #1o #1p #1q #1r #1s #1t #1u #1v #1w #1x #1y #1z}
     \def\qed{\qedmark\medbreak}%
\def\qedmark{{\enspace\vrule height 6pt width 5pt depth 1.5pt}}%
\def\Fq{{\bbF}_q}
\newcommand{\Z}{\mathbb Z}
\newcommand{\Q}{\mathbb Q}
\newcommand{\A}{\mathbb A}    
\newcommand{\F}{\mathbb F}
\newcommand{\isoto}{\stackrel{\sim}{\to}}
\nc{\embed}{\hookrightarrow}
\nc{\ol}{\overline}
\nc{\wt}{\widetilde}
\nc{\opp}{\mathrm{opp}}
\begin{document}
\def\wh{\widehat}
\def\DS{{\rm DS}}
\def\Mass{{\rm Mass}}
\def\Mat{{\rm Mat}}
\def\vol{{\rm vol}}

\renewcommand{\thefootnote}{\fnsymbol{footnote}}
\setcounter{footnote}{-1}
\numberwithin{equation}{section}

\title{Mass formula of division algebras over global function fields}
\author{Fu-Tsun Wei and Chia-Fu Yu}
\address{(Wei) Department of Mathematics, National Tsing-Hua
  University, Hsinchu 30013, Taiwan} 
\email{d947205@oz.nthu.edu.tw}

\address{
(Yu) Institute of Mathematics, Academia Sinica and NCTS (Taipei Office)\\
6th Floor, Astronomy Mathematics Building \\
No. 1, Roosevelt Rd. Sec. 4 \\ 
Taipei, Taiwan, 10617} 
\email{chiafu@math.sinica.edu.tw}

\date{\today}
\subjclass[2000]{11R52,11R58}
\keywords{mass formula, global function fields, central division algebras} 

\begin{abstract}
In this paper we give two proofs of the mass formula for definite central 
division algebras over global function fields, due to Denert and Van
Geel. The first proof is based on a calculation of Tamagawa measures. 
The second proof is based on analytic methods, in which we establish
the relationship directly between the mass and the value of the associated 
zeta function at zero. \\ 

\end{abstract} 

\maketitle


\section{Introduction}
\label{sec:01}

Let $K$ be a global function field with constant field $\Fq$. 
Fix a place $\infty$ of $K$, referred as the place at infinity.
Let $A$ be the subring of functions in $K$ regular everywhere
outside $\infty$. Let $B$ be a definite central division algebra of
dimension $r^2$ over $K$; see Section~\ref{sec:02}. 
Let $R$ be a maximal $A$-order in $B$ and let $G'$ be the
multiplicative group of $R$, regarded as a group scheme over
$A$. Denote by $\wh A$ the pro-finite completion of $A$, which is
the maximal open compact topological subring of the ring
$\A_K^\infty$ of finite adeles of $K$. 
The mass associated to the double coset space $G'(K)\backslash
G'(\A_K^\infty)/G'(\wh A)$ is defined as
\begin{equation}
  \label{eq:11}
  \Mass(G',G'(\wh A)):=\sum_{i=1}^h |\Gamma_i|^{-1}, \quad 
  \Gamma_i:=G'(K)\cap c_i G'(\wh A) c_i^{-1},
\end{equation}
where $c_1,\dots, c_h$ are complete representatives for the double
coset space.

In this paper we prove the following result.

\begin{thm}\label{11} We have
\begin{equation}\label{eq:12}
\Mass(G', G'(\wh A))=\frac{\#\Pic(A)}{q-1}\cdot \prod_{i=1}^{r-1}
\zeta_K(-i)\cdot \prod_{v\in S} \lambda_{v}, 
\end{equation}
where $\Pic(A)$ is the Picard group of $A$, 
\[ \zeta_K(s):=\prod_{v} (1-N(v)^{-s})^{-1}\] is the zeta function of
$K$, $S$ is the finite subset of ramified places for $B$ and
\begin{equation}
  \label{eq:13}
 \lambda_v=\prod_{1\le i\le r-1,\  d_v\nmid \,i} (N(v)^i-1), 
\end{equation}
where $d_v$ is the index of the central simple algebra $B_v=B\otimes_K
K_v$. 
\end{thm}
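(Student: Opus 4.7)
The plan is to prove the formula by computing Tamagawa measures, which is the standard route for such arithmetic-group mass formulas. The first step is to reduce to the derived group: let $G = \ker(\Nr \colon G' \to \Gm)$ be the norm-one group, an inner form of $\SL_r$ over $K$. The short exact sequence $1 \to G \to G' \xrightarrow{\Nr} \Gm \to 1$ and the surjectivity of the reduced norm let me express the double coset space for $G'$ as a fibration over that for $\Gm$; the base is $\Pic(A)$ and the stabilizer on each fiber is $\F_q^\times$, which yields the prefactor $\#\Pic(A)/(q-1)$ in the formula. What remains is to compute $\Mass(G, G(\wh A))$.

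The second step is to invoke that $G$ is semisimple and simply connected, so by Weil's conjecture on Tamagawa numbers (proved for function fields by Harder, with the remaining cases by Gaitsgory--Lurie) one has $\tau(G) = 1$. Since $B$ is definite, $G(K_\infty)$ is already compact, and the identity $\tau(G)=1$ translates, once a compatible system of local Haar measures is fixed, into
\[
\Mass\bigl(G, G(\wh A)\bigr) \;=\; \vol\bigl(G(K_\infty)\bigr)^{-1} \cdot \prod_{v \ne \infty} \vol\bigl(G(\O_v)\bigr)^{-1}.
\]
I would then evaluate the local volumes place by place. At an unramified finite place $v$ one has $\vol(\SL_r(\O_v)) = \prod_{i=2}^{r}(1 - N(v)^{-i})$ for the Tamagawa-compatible measure; multiplying over all $v \notin S \cup \{\infty\}$ and applying the functional equation of $\zeta_K$ rewrites the partial Euler product as $\prod_{i=1}^{r-1}\zeta_K(-i)$. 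At a ramified place $v \in S$, the norm-one group of the local division algebra of index $d_v$ contributes the extra factor $\lambda_v$, whose shape reflects the fact that the reductive quotient of the local unit group is $\GL_{r/d_v}(\F_{q^{d_v}})$ rather than the full $\GL_r(\F_{q})$; the exponents $i$ divisible by $d_v$ are precisely those whose contribution is already absorbed into the unramified Euler factor of a different form, so they are missing from $\lambda_v$. The volume at $\infty$ is computed by the same type of local formula on the compact group $G(K_\infty)$.

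The hard part is the bookkeeping that ties together the Tamagawa normalization, the functional equation for $\zeta_K$, and the discriminant of $R$, so that the residual powers of $q$ from the place $\infty$ and from the ramified local computations cancel exactly against those produced by the functional equation. The unramified computations and the invocation of $\tau(G) = 1$ are essentially formal; the genuine work is in the ramified local volume, which I would carry out via the parahoric structure of $G_v$ (equivalently, via the residue algebra of a maximal $A_v$-order in $B_v$) in order to extract the clean product $\lambda_v$.
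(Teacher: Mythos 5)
Your proposal matches the paper's first proof (Sections 3--5) in its essentials: reduce via the reduced-norm map to the prefactor $\#\Pic(A)/(q-1)$, invoke $\tau(G'_1)=1$, and reduce the mass to local volume computations, with a direct calculation of the ramified local factor via the residue algebra of a maximal order. The only organizational difference is that, instead of assembling the unramified Euler product place-by-place and invoking the functional equation of $\zeta_K$ (the ``hard part'' of bookkeeping you flag), the paper compares the twisted Tamagawa measure $\omega'_\A$ to the split-form measure $\omega_\A$ by an inner twist and quotes the known identity $\omega_\A(P)^{-1}=\prod_{i=1}^{r-1}\zeta_K(-i)$ as a black box, which leaves only the local ratios $\lambda_v=\omega_v(P_v)/\omega'_v(P'_v)$ at $v\in S$ to evaluate.
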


We remark that the mass (\ref{eq:11}) is defined only when the central
simple algebra is definite. For the complementary cases
where the central simple algebra is not definite, one easily shows
that its class number is equal to the class number of $A$
(Corollary~\ref{23}). This is the analogue of the classical theorem 
(due to Eichler) that any central simple algebra over $\Q$ which is
not a definite quaternion algebra has class number one. 

We say that a central simple algebra $B$ over $K$ is of Drinfeld type
if the invariant of $B$ at $\infty$ is $-1/r$ and $B$ is ramified at
one more (finite) place $\grp\subset A$. 
Recall that a Drinfeld $A$-module $\phi$
of rank $r$ over a $\kappa(\grp)$-field $\kappa_1$ is called {\it
  supersingular} if the group of $\bar \kappa_1$-valued points 
of the $\grp$-torsion
subgroup $\phi[\grp]$ is trivial, where $\bar \kappa_1$ denotes an
algebraic 
closure of $\kappa_1$. Let $\Sigma(r,\grp)$ denote the set
of isomorphism classes of supersingular Drinfeld $A$-modules of rank
$r$ over $\ol{\kappa(\grp)}$. The set $\Sigma(r,\grp)$ is in bijection
with the double space 
$ G'(K)\backslash G'(\A_K^\infty)/G'(\wh A)$ associated to the algebra
$B$ of Drinfeld type ramified at $\{\infty,\grp\}$ and each object
$\phi$ in $\Sigma(r,\grp)$ has only finitely many automorphisms.
One associates the geometric mass $\Mass(\Sigma(r,\grp))$ as 
\begin{equation}
  \label{eq:14}
  \Mass(\Sigma(r,\grp)):=\sum_{[\phi]\in \Sigma(r,\grp)}
  |\Aut(\phi)|^{-1}. 
\end{equation}
As an immediate consequence of Theorem~\ref{11} applied to Drinfeld type
division algebras $B$, we obtain the
following geometric mass formula \cite[Theorem
2.1]{yu-yu:ssd}. This is the function field analogue of the
Deuring-Eichler mass formula for supersingular elliptic curves.
\begin{thm}\label{12} We have
  \begin{equation}
    \label{eq:15}
  \Mass(\Sigma(r,\grp)):=\frac{\#\Pic(A)}{q-1}\cdot \prod_{i=1}^{r-1}
\zeta^{\infty,\grp}_K(-i),  
  \end{equation}
where $\zeta^{\infty,\grp}_K(s)=\prod_{v\neq \infty, \grp}
(1-N(v)^{-s})^{-1}$ is the zeta function of $K$ with the local
factors at $\infty$ and $\grp$ removed.   
\end{thm}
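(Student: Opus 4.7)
The plan is to deduce \thmref{12} as a direct specialization of \thmref{11} applied to the Drinfeld-type division algebra $B$ of dimension $r^2$ ramified precisely at $S=\{\infty,\grp\}$, combined with the identification of sets recalled in the excerpt.

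First, the excerpt asserts that $\Sigma(r,\grp)$ is in bijection with the double coset space $G'(K)\backslash G'(\A_K^\infty)/G'(\wh A)$ for this $B$, and that every $\phi\in\Sigma(r,\grp)$ has only finitely many automorphisms. The key observation, which I would verify, is that under this bijection the object $\phi$ attached to the double coset of $c_i$ has automorphism group naturally identified with $\Gamma_i = G'(K)\cap c_i G'(\wh A)c_i^{-1}$; this uses the standard description of the endomorphism ring of a supersingular Drinfeld module at $\grp$ as a maximal order in $B$ together with the adelic translation of lattice classes. Granting this, $\Mass(\Sigma(r,\grp))=\Mass(G',G'(\wh A))$, and \thmref{11} becomes applicable.

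Next I would pin down the local data. Since $B$ has invariants $-1/r$ at $\infty$ and $+1/r$ at $\grp$ (the two invariants summing to $0$ in $\Q/\Z$), the local index at each ramified place equals $r$, so $d_\infty=d_\grp=r$. For such $v$ the divisibility $d_v\mid i$ fails for every $1\le i\le r-1$, hence
\[
\lambda_v=\prod_{i=1}^{r-1}(N(v)^i-1),\qquad v\in\{\infty,\grp\}.
\]

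It then remains to match (\ref{eq:12}) and (\ref{eq:15}) by pure bookkeeping. Splitting off the two missing Euler factors gives
\[
\zeta_K(-i)=\zeta_K^{\infty,\grp}(-i)\cdot(1-N(\infty)^i)^{-1}(1-N(\grp)^i)^{-1},
\]
and $(N(v)^i-1)=-(1-N(v)^i)$ produces a sign $(-1)^{r-1}$ for each of $v=\infty,\grp$, the two contributions canceling. Thus $\lambda_\infty\lambda_\grp$ exactly absorbs the removed local factors and (\ref{eq:12}) collapses to (\ref{eq:15}). The only genuinely nontrivial point in the whole argument is the automorphism/stabilizer identification in the first paragraph; the rest is sign and Euler-factor accounting, which is why the result is labelled an immediate consequence of \thmref{11}.
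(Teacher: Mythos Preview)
Your proposal is correct and matches the paper's approach: the paper treats Theorem~\ref{12} as an immediate consequence of Theorem~\ref{11} applied to the Drinfeld-type division algebra with $S=\{\infty,\grp\}$ and does not spell out the Euler-factor/sign bookkeeping you (correctly) supply. The identification $\Aut(\phi)\cong\Gamma_i$ you flag as the one nontrivial input is indeed assumed in the paper (it is implicit in the sentence asserting the bijection $\Sigma(r,\grp)\leftrightarrow G'(K)\backslash G'(\A_K^\infty)/G'(\wh A)$ and the definition of $\Mass(\Sigma(r,\grp))$), with reference to \cite{yu-yu:ssd}.
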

Theorem~\ref{12}  was proved by Gekeler when $r=2$ or $K$ is the
rational function field (\cite[Theorem 1, p.~144]{gekeler:quaternion}, 
\cite[Theorem 2.5, p.~321 and 5.1, p.~328]{gekeler:mass}), and by
J.~Yu and the second author \cite{yu-yu:ssd} for both arbitrary $r$
and global function fields $K$. The proof in loc.~cit. 
consists of two parts: The first one deduces
the mass, through manipulating Tamagawa measures, 
as a  product of zeta values up to explicit local indices (the
ratio of the volumes of two local open compact groups at each ramified
place); see also (\ref{eq:38}). Then one uses Gekeler's result of
geometric mass formula for the rational global function field case to 
determine the local index. This argument yields the local index
where the local invariant of $B$ is $\pm 1/r$ for free, however, 
its proof roots in the result of counting supersingular Drinfeld
modules in the Drinfeld moduli scheme modulo the finite prime $\grp$.    

Since central division algebras 
considered in Theorem~\ref{11} may not arise from geometry, 
that is, as endomorphism algebras of certain Drinfeld modules,
the question of determining the mass formula goes beyond the reach of
geometric methods and hence a different approach is needed. 
In this paper we give two proofs of Theorem~\ref{11}.
As a consequence we obtain two different proofs of the 
geometric mass formula (\ref{eq:15}). 
For the first proof we calculate the remaining the ratio 
of local volumes directly. The proof is given in Section~\ref{sec:05};
some basic results in central simple algebras over local fields are
recalled in Section~\ref{sec:04}.

The second proof is analytic. 
In this proof we directly show that the associated
mass is equal to the value of the associated zeta function at zero;
see Subsection~\ref{sec:61}. 






After the present manuscript was completed, the authors learned that
Theorem~\ref{11} was first obtained in Denert and van Geel
\cite{denert-geel:classno88} and that it is also a consequence of deep
results of Gopal Prasad~\cite{prasad:s-volume}. Therefore, the main
result presented in this paper is not new. However, we hope that the
detailed calculations presented in this paper may be helpful to some readers
who wish to know more elementary steps.  



\section{Preliminaries}
\label{sec:02}

\subsection{Notation}
\label{sec:21}

Let $K$ be a global function field with constant field $\F_q$. 
Fix a place $\infty$ of $K$, referred as the place at infinity. All
other places of $K$ are referred as finite places. Let $A$
be the subring of functions in $K$ regular everywhere outside $\infty$. For
each place $v$ of $K$, denote by $K_v$ the completion of $K$
at $v$, and denote by $O_v$ the ring of integers in $K_v$. When $v$ is
finite, the ring $O_v$ equals the completion $A_v$ of $A$ at $v$.  
We also write  $\kappa(v)$ for the residue field $O_v/\pi_v$ at $v$, 
where $\pi_v$ is a uniformizer of $O_v$, and put
$N(v):=\# \kappa(v)$. 


For any $A$-module or $K$-module $M$, we write $M_v$
for $M\otimes_A A_v$ if $v$ is finite, and $M_v$ for $M\otimes_K K_v$ for any
place $v$. Let $\A^\infty_K$ denote 
the ring of finite adeles of $K$
(with respect to $\infty$) and put
\[ \wh A:=\prod_{v:{\rm finite}} A_v,  \]
the pro-finite completion of $A$.  

For a linear algebraic group $G$ over
  $K$ and an open compact subgroup $U$ of $G(\A^\infty_K)$, 
denote by $\DS (G,U)$ the double coset space $G(K)\backslash
  G(\A_K^\infty)/U$. If the arithmetic subgroup $G(K)\cap U$ is finite,
  or equivalently that any ($\infty$-)arithmetic subgroup $\Gamma$ of $G(K)$
  (i.e. $\Gamma$ is a subgroup commensurable to $G(K)\cap U$
  ) is finite, define
\[ \Mass(G,U):=\sum_{i=1}^h |\Gamma_i|^{-1}, \quad \Gamma_i:=G(K)\cap
  c_i U c_i^{-1}, \]
where $c_1,\dots,c_h$ are complete representatives for $\DS(G,U)$. 
It is easy to show that
$\Mass(G,U)$ does not depend on the choice of representatives $c_i$.  

\subsection{Class numbers of indefinite central simple algebras}
\label{sec:22}

Let $B$ be a central simple algebra over $K$. 
An {\it $A$-order} in $B$ is an $A$-subring
of $B$ which is finite as an $A$-module and spans $B$ over $K$. 
An $A$-order in $B$ is called {\it maximal} if
it is not properly contained in another $A$-order in $B$. 
Let $\Lambda$ be a maximal $A$-order in $B$. By a right 
fractional ideal of $\Lambda$ we mean a non-zero finite
right $\Lambda$-submodule $I$ in $B$; $I$ is called {\it full} if it
spans $B$ over $K$. When $B$ is a division algebra, any fractional
ideal of $\Lambda$ is full. Let $\calL$ be the set of all
full right fractional ideals of $\Lambda$ in $B$. 
Two right fractional ideals $I$
and $I'$ are said to be {\it locally equivalent at a finite place $v$} 
if $I'_v=g_v I_v$ for some element $g_v\in B_v^\times$; 
they are said to be {\it globally equivalent} if there is an element
$g\in B^\times$ such that $I'=g I$. This is 
equivalent to that $I_v$ and $I'_v$ (resp. $I$ and $I'$) are
isomorphic as $\Lambda_v$-modules (resp. as $\Lambda$-modules).
 
Since $\Lambda_v$ is a maximal $A_v$-order, any one-sided ideal of 
$\Lambda_v$ is principal (\cite[Theorem 18.7, p.~179]{reiner:mo}). 
It follows that
any two full right fractional ideals are
locally equivalent everywhere, that is, the set $\calL$ of ideals forms a
single genus. Let $\calL/\sim$ denote the set of global equivalence 
classes of right ideals in $\calL$. Let $G'$ be the group scheme over $A$
associated to the multiplicative group of $\Lambda$. 
For each commutative $A$-algebra $L$, the group of $L$-valued 
points of $G'$ is 
\[ G'(L)=(\Lambda\otimes_A L)^\times. \]
The above argument establishes the following well-known basic fact:

\begin{lemma}\label{21}
  There is a natural bijection
\[ \varphi: G'(K)\backslash G'(\A_K^\infty)/G'(\wh A) \to \calL/\sim \]
which maps the identity class to the trivial class $[\Lambda]$.
\end{lemma}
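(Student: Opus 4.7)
The plan is to construct the bijection $\varphi$ explicitly by sending an idele class to the right $\Lambda$-ideal obtained by gluing its local data, and then to check well-definedness, injectivity, and surjectivity using the fact recalled just before the lemma that every one-sided ideal of a maximal order $\Lambda_v$ over the local ring $A_v$ is principal (Reiner, Theorem~18.7).

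First I would define the map on representatives. Given $g=(g_v)\in G'(\A_K^\infty)=(B\otimes_K\A_K^\infty)^\times$, note that $g_v\in \Lambda_v^\times$ for almost all finite $v$, so the collection $\{g_v\Lambda_v\}_v$ determines a unique full right $\Lambda$-submodule
\[ I(g):=B\cap\prod_v g_v\Lambda_v\subset B, \]
characterized by $I(g)_v=g_v\Lambda_v$ for every finite $v$. Setting $\varphi([g]):=[I(g)]$, I would verify that $\varphi$ is well-defined: replacing $g$ by $\gamma g u$ with $\gamma\in B^\times=G'(K)$ and $u=(u_v)\in G'(\wh A)=\prod_v \Lambda_v^\times$ changes $g_v\Lambda_v$ into $\gamma g_v u_v\Lambda_v=\gamma g_v\Lambda_v$ place by place, hence $I(\gamma g u)=\gamma I(g)$, which is globally equivalent to $I(g)$.

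Next I would handle injectivity and surjectivity in parallel using the local principal-ideal fact. For injectivity, assume $I(g')=\gamma I(g)$ for some $\gamma\in B^\times$. Then $g'_v\Lambda_v=\gamma g_v\Lambda_v$ in $B_v$ for every finite $v$; since $\Lambda_v$ is a maximal $A_v$-order, this equality of right ideals forces $(\gamma g_v)^{-1}g'_v\in \Lambda_v^\times$, so $u:=\gamma^{-1}g'g^{-1}$ lies in $G'(\wh A)$ and $g'=\gamma g u$. For surjectivity, take any $I\in\calL$. By the local principal-ideal result, for each finite $v$ we may write $I_v=g_v\Lambda_v$ with $g_v\in B_v^\times$, and since $I_v=\Lambda_v$ for almost all $v$ we may choose $g_v\in \Lambda_v^\times$ there. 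Then $g=(g_v)\in G'(\A_K^\infty)$ and $I(g)=I$ by construction.

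The identity class obviously maps to $[\Lambda]$, since $I(1)=\Lambda$. The only real step of substance is the local principal-ideal fact; once that is available, the rest is just bookkeeping with adeles, so I do not anticipate a significant obstacle. I would, however, be careful to state explicitly why the intersection $I(g)=B\cap\prod_v g_v\Lambda_v$ is a full right fractional ideal (finite generation over $A$ uses that $g_v\Lambda_v=\Lambda_v$ for almost all $v$, together with the fact that $\Lambda$ itself is finitely generated over $A$), so that $\varphi$ indeed lands in $\calL/\sim$.
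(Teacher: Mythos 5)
Your approach is exactly the standard adelic argument the paper appeals to; the paper in fact gives no separate proof, stating only that ``the above argument'' (the local principal-ideal theorem, Reiner Theorem 18.7, and the resulting single-genus observation) establishes the lemma. Your write-up makes that implicit argument explicit, and the well-definedness and surjectivity steps are fine.

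One slip in the injectivity step: you correctly deduce $(\gamma g_v)^{-1}g'_v\in\Lambda_v^\times$ for every finite $v$, but you then set $u:=\gamma^{-1}g'g^{-1}$, whose $v$-component is $\gamma^{-1}g'_v g_v^{-1}$, not $(\gamma g_v)^{-1}g'_v$. With that $u$ you would get $\gamma g u=\gamma g\gamma^{-1}g'g^{-1}\neq g'$ in general, and there is no reason for $\gamma^{-1}g'_v g_v^{-1}$ to lie in $\Lambda_v^\times$ (conjugation by $g_v$ does not preserve $\Lambda_v^\times$). The correct element is $u:=(\gamma g)^{-1}g'=g^{-1}\gamma^{-1}g'$, whose $v$-component is precisely $(\gamma g_v)^{-1}g'_v\in\Lambda_v^\times$, and which satisfies $\gamma g u=g'$. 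With that correction the argument is complete.
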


The cardinality of $\calL/\sim$ is
independent of the choice of the maximal order $\Lambda$; this follows
from the basic fact that any two maximal orders are locally
conjugate. The number $\# \calL/\sim$ 
is called the class number of $B$ (relative to $\infty)$, which we
denote $h^\infty(B)$ or simply $h(B)$ as the place $\infty$ has been fixed.  
We shall call $B$ definite (at $\infty$) if $B_\infty:=B\otimes_K
K_\infty$ is a division algebra, and $B$ indefinite (at $\infty$)
otherwise. 

\begin{lemma}\label{22}
Assume that $B$ is indefinite.
Let $U$ be an open compact subgroup of $G'(\A_K^\infty)$. Then 
the reduced norm map $N_{B/K}$ induces a bijection of double coset spaces
\[ N_{B/K}: G'(K)\backslash G'(\A_K^\infty)/U \simeq
  K^\times \backslash\A_K^{\infty,\times}/N_{B/K}(U). \]
\end{lemma}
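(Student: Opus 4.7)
The plan is to combine two classical facts. First, the Hasse--Schilling--Maass norm theorem; in the function field setting (no archimedean places) this simply says that $N_{B/K}\colon B^\times\to K^\times$ is surjective. Second, strong approximation for the reduced-norm-one group $B^1 := \{x\in B^\times : N_{B/K}(x)=1\}$, which is a $K$-form of $\SL_r$. Since $B$ is indefinite, $B_\infty \cong M_d(D_\infty)$ with $d\ge 2$, so $B^1(K_\infty)$ contains an unbounded copy of $\SL_d(K_\infty)$; the Kneser--Platonov theorem then yields density of $B^1(K)$ in $B^1(\A_K^\infty)$.

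First I would check that $N_{B/K}$ descends to a well-defined surjective map on double cosets. Well-definedness is immediate. For surjectivity, at every finite place $v$ the local reduced norm $B_v^\times \to K_v^\times$ is surjective (a standard property of central simple algebras over non-archimedean local fields), so the product map $G'(\A_K^\infty)\to \A_K^{\infty,\times}$ is surjective, and hence so is the induced map on double cosets.

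For injectivity, suppose $g_1,g_2\in G'(\A_K^\infty)$ satisfy $N_{B/K}(g_1)=\alpha\, N_{B/K}(g_2)\, N_{B/K}(u)$ for some $\alpha\in K^\times$ and $u\in U$. Use Hasse--Schilling--Maass to pick $b\in B^\times=G'(K)$ with $N_{B/K}(b)=\alpha$; replacing $g_1$ by $b^{-1}g_1$ reduces to the case $\alpha=1$. Then $h := g_1 u^{-1} g_2^{-1}$ lies in $B^1(\A_K^\infty)$. The subset $V := g_2 U g_2^{-1} \cap B^1(\A_K^\infty)$ is an open neighborhood of the identity, so by strong approximation there exists $\beta\in B^1(K)\subset G'(K)$ with $\beta^{-1}h\in V$. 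Writing $\beta^{-1}h = g_2 u' g_2^{-1}$ with $u'\in U$ gives $g_1 = h g_2 u = \beta g_2 u' u \in G'(K)\cdot g_2\cdot U$, as required.

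The main obstacle is the careful invocation of strong approximation: one must verify its hypotheses for $B^1$ (simply connected, absolutely almost simple, and with $B^1(K_\infty)$ non-compact, which is precisely the indefinite assumption on $B$) and then conjugate by $g_2$ so that the approximating element witnesses the desired membership in $G'(K) g_2 U$ rather than in a shifted subgroup. Everything else is formal bookkeeping.
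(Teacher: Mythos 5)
Your proposal follows essentially the same route as the paper's proof: both hinge on strong approximation for the reduced-norm-one group $G'_1=B^1$ (valid precisely because $B_\infty$ is not a division algebra, so $B^1(K_\infty)$ is non-compact), together with the fact that $N_{B/K}\colon B^\times\to K^\times$ is surjective in the function field setting. The only cosmetic difference is that the paper fixes a section $s$ of the reduced norm and analyzes the fiber $T_{[a]}$ as an image of $G'_1(K)\backslash G'_1(\A_K^\infty)/U'$, while you run the injectivity argument directly by forming $h=g_1u^{-1}g_2^{-1}\in B^1(\A_K^\infty)$ and approximating it by $\beta\in B^1(K)$ inside the conjugated open subgroup $g_2Ug_2^{-1}$; you are also slightly more explicit than the paper in naming the Hasse--Schilling--Maass/Eichler norm theorem, which the paper uses implicitly when it asserts that $T_{[a]}$ is represented by elements $gs(a)$ with $g$ of reduced norm one.
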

\begin{proof}
  This follows from the strong approximation; we provide the proof for
  the reader's convenience. We may assume that $B\neq K$. 
  Clearly the induced map is surjective. We show the
  injectivity. Let $[a]$ be an element in the target space. Fix a section
  $s:\A_{K}^{\infty,\times} \to G'(\A_{K}^\infty)$ of the map
  $N_{B/K}$. Then 
  the inverse image $T_{[a]}$ of the class $[a]$ consists of elements
  $_{G'(K)} [g s(a)]_U$ for all $g\in G'_1(\A_K^{\infty})$, where
  $G'_1\subset G'$ is the reduced norm one algebraic subgroup. The
  surjective map $g\mapsto\,  _{G'(K)}[g s(a)]_U$ induces a
  surjective map 
\[  \alpha: G'_1(K)\backslash G'_1(\A_{K}^{\infty})/U' \to T_{[a]}, \]
where $U':=s(a) U s(a)^{-1} \cap G'_1(\A_{K}^{\infty})$. 
Since the group $G'_1$ is semi-simple, simply connected and
$G'_1(K_\infty)$  is not compact, the strong approximation 
holds for the algebraic group $G'_1$.  
Therefore, $T_{[a]}$ consists of a single element and this proves 
the lemma. \qed   
\end{proof}

\begin{cor}\label{23} Assume that $B$ is indefinite.
\begin{enumerate}
\item We have $h(B)=\# \Pic (A)=:h(A)$, where $\Pic (A)$ is the Picard
      group of $A$.

\item If $A$ is a principal ideal domain, then any full one-sided
  ideal of $\Lambda$ is principal. 
\end{enumerate}
\end{cor}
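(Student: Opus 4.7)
The plan is to combine Lemma~\ref{22} with a local computation of the image of the reduced norm on $G'(\wh A)$. Taking $U = G'(\wh A) = \prod_{v\,\text{finite}} \Lambda_v^\times$, I would first verify the local statement $N_{B_v/K_v}(\Lambda_v^\times) = A_v^\times$ at every finite place $v$. When $B_v$ is split, this is the surjectivity of the determinant $\det\colon GL_r(A_v)\twoheadrightarrow A_v^\times$. When $B_v$ is a central division algebra over $K_v$, the reduced norm on $B_v^\times$ is surjective onto $K_v^\times$, a uniformizer of $B_v$ has reduced norm of valuation $1$ in $K_v$, and $\Lambda_v^\times$ equals the preimage of $A_v^\times$ under $N_{B_v/K_v}$ (since the unique maximal order is the valuation ring). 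Taking a product over finite places yields $N_{B/K}(G'(\wh A)) = \wh{A}^\times$.

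Next, applying Lemma~\ref{22} with $U = G'(\wh A)$ and combining with Lemma~\ref{21}, I obtain
\[
h(B) \;=\; \#\,\DS(G', G'(\wh A)) \;=\; \#\bigl(K^\times\backslash \A_K^{\infty,\times}/\wh{A}^\times\bigr) \;=\; \#\Pic(A),
\]
where the final equality is the standard idelic presentation of the Picard group of the Dedekind domain $A$ (finite ideles modulo $\wh{A}^\times$ give fractional ideals of $A$, and further quotienting by $K^\times$ gives $\Pic(A)$). This proves (1).

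For (2), if $A$ is a principal ideal domain then $\Pic(A)$ is trivial, so by (1) we have $h(B)=1$. Via the bijection $\varphi$ of Lemma~\ref{21}, $\calL/\!\sim$ is a singleton, so every full right fractional ideal of $\Lambda$ is globally equivalent to $\Lambda$ and hence equals $g\Lambda$ for some $g \in B^\times$. Applying the same argument to the opposite algebra $B^{\opp}$, whose maximal $A$-orders are the $\Lambda^{\opp}$ and which is again indefinite (because $B_\infty^{\opp}$ is a division algebra iff $B_\infty$ is), yields the analogous statement for full left ideals.

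There is no serious obstacle here: the deep content is already packaged into Lemma~\ref{22} (via strong approximation for the norm-one group $G'_1$), and the only remaining ingredient is the local surjectivity of the reduced norm on unit groups of maximal orders, which is classical structure theory of central simple algebras over local fields.
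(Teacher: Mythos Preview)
Your approach is exactly the paper's: the authors simply write ``These easily follow from Lemmas~\ref{21} and \ref{22}'', and you have correctly unpacked what that entails, including the identification $K^\times\backslash \A_K^{\infty,\times}/\wh A^\times \simeq \Pic(A)$ and the passage to $B^{\opp}$ for left ideals in part~(2).

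One small remark on your local computation: your case split ``$B_v$ split'' versus ``$B_v$ a central division algebra'' is not exhaustive, since at a ramified finite place one may have $B_v \simeq \Mat_{m_v}(\Delta_v)$ with $1 < m_v < r$. This is not a real obstacle---your division-algebra argument gives $N_{\Delta_v/K_v}(O_{\Delta_v}^\times) = A_v^\times$, and then embedding $O_{\Delta_v}^\times$ diagonally into $\GL_{m_v}(O_{\Delta_v}) \simeq \Lambda_v^\times$ immediately yields $N_{B_v/K_v}(\Lambda_v^\times) = A_v^\times$ in the general case---but the sentence as written should be adjusted.
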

\begin{proof}
  These easily follow from Lemmas~\ref{21} and \ref{22}. \qed
\end{proof}

\begin{lemma}\label{24}
  Notation as above. The algebra $B$ is definite
  if and only if any $\infty$-arithmetic subgroup $\Gamma$ of $G'(K)$
  is finite. 
\end{lemma}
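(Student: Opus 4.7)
The plan is to reduce the biconditional to a statement about a single distinguished subgroup. For any open compact $U\subset G'(\A_K^\infty)$, the intersection $U\cap G'(\wh A)$ is open in the compact group $G'(\wh A)$, hence of finite index there, and symmetrically of finite index in $U$; pulling back to $G'(K)$ shows $G'(K)\cap U$ and $G'(K)\cap G'(\wh A)=\Lambda^\times$ are commensurable. Since commensurable subgroups are simultaneously finite or infinite, the lemma reduces to proving that $\Lambda^\times$ is finite if and only if $B$ is definite.

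For the ``definite $\Rightarrow$ finite'' direction, I would use a compactness-and-discreteness argument on the full adelic group. Any $\gamma\in \Lambda^\times$ satisfies $\gamma,\gamma^{-1}\in\Lambda$, so the reduced norm $N_{B/K}(\gamma)$ and its inverse both lie in $A$, forcing $N_{B/K}(\gamma)\in A^\times=\F_q^\times$ and in particular $|N_{B/K}(\gamma)|_\infty=1$. When $B$ is definite, $B_\infty$ is a local division algebra with unique maximal order $\O_{B_\infty}$, whose unit group $\O_{B_\infty}^\times=\{x\in B_\infty^\times:|N(x)|_\infty=1\}$ is compact; hence the image of $\Lambda^\times$ in $B_\infty^\times$ lies in $\O_{B_\infty}^\times$. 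Combined with $\Lambda^\times\subset G'(\wh A)$ at the finite adeles, the diagonal embedding sends $\Lambda^\times$ into the compact set $\O_{B_\infty}^\times\times G'(\wh A)\subset G'(\A_K)$. Since $G'(K)=B^\times$ is discrete in $G'(\A_K)$ (inherited from the discreteness of $B$ in $\A_K(B)$), the subgroup $\Lambda^\times$, being discrete inside a compact set, is finite.

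For the ``indefinite $\Rightarrow$ infinite'' direction, assume $B_\infty\simeq M_s(D_\infty)$ with $s\ge 2$. Let $G'_1\subset G'$ be the reduced norm one subgroup; it is a semisimple, simply connected $K$-group, and $G'_1(K_\infty)$ is non-compact (it contains an unbounded copy of $SL_s$). By reduction theory for $S$-arithmetic groups over global function fields (Harder, Behr), $G'_1(K)\cap G'(\wh A)$ is a lattice in $G'_1(K_\infty)$, hence infinite; this subgroup lies inside $\Lambda^\times$.

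The main obstacle will be justifying the indefinite direction without invoking reduction theory as a black box. As a fallback I would keep an elementary construction in reserve: produce a maximal commutative $K$-subfield $L\subset B$ with $[L:K]=r$ such that $L_\infty=L\otimes_K K_\infty$ is not a field. Such an $L$ exists because the indefiniteness of $B$ forces $B_\infty$ to contain a split étale $K_\infty$-algebra of dimension $r$, which can be globalised using the density of $K$ in $K_\infty$ and the fact that separable maximal subfields of $B$ form a dense set. Then the integral closure of $A$ in $L$ has at least two places above $\infty$, and the function-field analog of Dirichlet's unit theorem produces an infinite unit group inside $L^\times\cap \Lambda^\times$, proving the claim.
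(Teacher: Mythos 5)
Your proposal is correct. Note first that the paper itself gives no argument for this lemma (the proof reads ``This is clear''), so there is no written proof to compare against; your commensurability reduction and the compactness argument for the definite direction are almost certainly what the authors have in mind. The key steps all check out: $G'(K)\cap U$ and $\Lambda^\times=G'(K)\cap G'(\widehat A)$ are commensurable because any two compact open subgroups of $G'(\A_K^\infty)$ are; $N_{B/K}(\Lambda^\times)\subset A^\times=\F_q^\times$ puts the diagonal image of $\Lambda^\times$ inside the compact set $\O_{B_\infty}^\times\times G'(\widehat A)$ once $B_\infty$ is division; and a discrete subgroup of $G'(\A_K)$ contained in a compact subset is finite.

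For the indefinite direction, both routes you sketch do work, but there is a more economical argument already within the paper's toolkit, which I would prefer to either Harder--Behr reduction theory (heavy) or the maximal-subfield construction (which, as you acknowledge, needs some care to globalize a prescribed splitting at $\infty$ and to pass from $\O_L^\times$ to $\Lambda^\times$ via a conductor argument). Namely: when $B$ is indefinite, $G'_1(K_\infty)$ is noncompact, so by the strong approximation already invoked in the proof of Lemma~\ref{22}, $G'_1(K)$ is dense in $G'_1(\A_K^\infty)$. Hence $G'_1(K)\cap G'_1(\widehat A)$ is dense in the compact open subgroup $G'_1(\widehat A)$, which is infinite (at any unramified finite place its $v$-component is $\SL_r(O_v)$). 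A dense subgroup of an infinite topological group is infinite, and this dense subgroup sits inside $\Lambda^\times$, so $\Lambda^\times$ is infinite. This keeps the whole proof self-contained relative to what the paper has already established.
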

\begin{proof} This is clear. 

\end{proof}\ 

From discussion above, the mass $\Mass(G',U)$ is defined only when the
central simple algebra $B$ is definite. When $B$ is indefinite, the
class number $h(B)$ is equal to $h(A)$. One can calculate the class
number $h(A)$ of $A$ by the following formula 
\cite[p.~143, (1.5)]{gekeler:quaternion}:
\begin{equation}
  \label{eq:21}
  h(A)=\deg \infty P(1),
\end{equation}
where $\deg \infty$ is the degree of $\infty$, and $P(T)\in \Z[T]$ is
the polynomial so that 
\[ \zeta_K(s)=\frac{P(q^{-s})}{(1-q^{-s})(1-q^{1-s})}. \]
In the sequel we shall only consider the case 
where $B$ is a definite central division algebra over $K$.
  
\section{Proof of Theorem~\ref{11}}
\label{sec:03}

Keep the notation as in Section~\ref{sec:01}; $B$ and $R$ are as
before. 
Let $B_0$ be the matrix algebra $\Mat_r(K)$ and let $R_0:=\Mat_r(A)$
be the standard maximal $A$-order in $B_0$.
Let $G$ and $G'$ be the group schemes over $A$ 
associated to the multiplicative
groups of $R_0$ and $R$, respectively. 
Let $G_1$ (resp. $G'_1$) denote the reduced norm one subgroup schemes 
of $G$ (resp. $G'$).

First we have 
\begin{equation}
  \label{eq:31}
  \Mass (G'(\wh A)):=\Mass(G',G'(\wh A))=\frac{\vol(G'(K)\backslash
  G'(\A^\infty_K))}{\vol(G'(\wh A))}, 
\end{equation}
for any Haar measure $dg'$ on $G'(\A^\infty_K)$. A simple computation
(cf. \cite{yu-yu:ssd}) shows that
\begin{equation}\label{eq:35}
  \begin{split}
   \Mass(G'(\wh A))& =\frac{\#\Pic(A)}{q-1} 
   \cdot \tau(G'_1)\cdot \omega'_\A(P')^{-1},\quad
    \\
   & =\frac{\#\Pic(A)}{q-1} 
   \cdot \omega'_\A(P')^{-1} \quad (\tau(G'_1)=1, \text{Weil's Theorem
   \cite{weil:adele}})   
  \end{split}
\end{equation}
where $P':=\prod_v P'_v \mbox{ with } P'_v:=G'_1(O_v)$,
$\omega'_{\A}$ is the Tamagawa measure on $G'_1(\A_K)$ and
$\tau(G'_1)$ is the Tamagawa number of $G'_1$.  

Let $\omega$ be an invariant $K$-rational differential 
form of top degree on the group $G_1$, 
and let $\omega'$ be the pull back of $\omega$ via an
inner twist 
$\alpha:G'_1\isoto G_1$ (over a finite extension of $K$). 
They give rise to the Tamagawa measures
$\omega_\A$ and $\omega'_\A$ on $G_1$ and $G'_1$, respectively. Then
\begin{equation}\label{eq:36}
 \omega'_\A(P')=\omega_\A(P) \cdot \prod_{v\in S}
 \frac{\omega'_{v}(P'_v)}
 {\omega_{v}(P_v)},
\end{equation}
where $P=\prod_v P_v$, $P_v:=G_1(O_v)$ and 
$S$ is the finite set of ramified
places for $B$. From the well-known fact that
$\omega_\A(P)^{-1}=\prod_{i=1}^{r-1} \zeta_K(-i)$, we get 
\begin{equation}\label{eq:38}
\Mass(G'(\wh A))=\frac{\#\Pic(A)}{q-1}\cdot \prod_{i=1}^{r-1}
  \zeta_K(-i)\cdot \prod_{v\in S} \lambda_{v}, \quad 
  \lambda_v:= \frac{\omega_{v}(P_v)}{\omega'_{v}(P'_v)} .
\end{equation}

\begin{prop}\label{31}
  Suppose that $B_v\simeq \Mat_{m_v}(\Delta_v)$, where $\Delta_v$ is the
  division part of $B_v$, and let $d_v$ be the index of
  $\Delta_v$. Then 
  \begin{equation}
    \label{eq:310}
  \lambda_v=\prod_{1\le i\le r-1,\  d_v\nmid \,i} (N(v)^i-1). 
  \end{equation}
\end{prop}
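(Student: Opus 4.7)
The plan is to evaluate both local volumes explicitly and compare. I would first reduce the comparison from the reduced-norm-one groups to the full unit groups. The sequences
\[ 1\to G_1\to G\xrightarrow{\det}\Gm\to 1,\qquad 1\to G'_1\to G'\xrightarrow{\mathrm{Nrd}}\Gm\to 1 \]
are surjective on $O_v$-points, and correspond to each other under the inner twist with the same $\Gm$-quotient. Decomposing $\tilde\omega$ and $\tilde\omega'$ as wedges of $\omega,\omega'$ with a common top form on $\Gm$, the $\Gm$-factor cancels in the ratio and
\[ \lambda_v=\frac{\tilde\omega_v(G(O_v))}{\tilde\omega'_v(R_v^\times)}. \]

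The split side is standard: $\tilde\omega_v(G(O_v))=|\GL_r(\kappa(v))|/N(v)^{r^2}$. For the twisted side, I would decompose $R_v^\times$ via $R_v^\times\supset 1+\pi_v R_v$ to get
\[ \tilde\omega'_v(R_v^\times)=\vol_{\mathrm{add}}(R_v)\cdot N(v)^{-r^2}\cdot |(R_v/\pi_v R_v)^\times|, \]
where $\vol_{\mathrm{add}}$ is the additive Haar measure on $B_v$ induced by $\tilde\omega'$. The unit count is a direct ring-theoretic calculation: $R_v/\pi_v R_v=\Mat_{m_v}(\O_{\Delta_v}/\pi_v)$ and $\O_{\Delta_v}/\pi_v$ is a local ring of length $d_v$ with residue field $\F_{N(v)^{d_v}}$, so an element is a unit iff its reduction mod the radical lies in $\GL_{m_v}(\F_{N(v)^{d_v}})$, giving $|(R_v/\pi_v R_v)^\times|=|\GL_{m_v}(\F_{N(v)^{d_v}})|\cdot N(v)^{d_v(d_v-1)m_v^2}$.

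The main obstacle is computing $\vol_{\mathrm{add}}(R_v)$. The key point is that the inner twist identifies the reduced trace form on $B_v$ with the matrix trace form on $\Mat_r$; hence the twisted additive measure on $B_v$ is the self-dual measure for the reduced trace pairing, under which any $O_v$-lattice $L\subset B_v$ has $\vol_{\mathrm{add}}(L)=|\disc(L/O_v)|_v^{1/2}$ (normalized so that $\Mat_r(O_v)$ has volume $1$ in the split case). Collecting the standard inputs in Section~\ref{sec:04} --- $\disc(\O_{\Delta_v}/O_v)=\grp_v^{d_v(d_v-1)}$ together with the multiplicativity $\disc(\Mat_m(A)/O_v)=\disc(A/O_v)^{m^2}$ --- one obtains $\disc(R_v/O_v)=\grp_v^{m_v^2 d_v(d_v-1)}$ and hence $\vol_{\mathrm{add}}(R_v)=N(v)^{-m_v^2 d_v(d_v-1)/2}$.

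With these inputs the rest is routine bookkeeping. Using $|\GL_n(\F_q)|=q^{n(n-1)/2}\prod_{j=1}^n(q^j-1)$ for both $\GL_r(\kappa(v))$ and $\GL_{m_v}(\F_{N(v)^{d_v}})$, together with $r=m_v d_v$, the powers of $N(v)$ assemble into the same prefactor $N(v)^{-r(r+1)/2}$ on both sides. The ratio becomes
\[ \lambda_v=\frac{\prod_{j=1}^{r}(N(v)^j-1)}{\prod_{j=1}^{m_v}(N(v)^{d_v j}-1)}, \]
and since the denominator is precisely the sub-product of those factors $(N(v)^j-1)$ in the numerator with $d_v\mid j$ (including $j=r$), they cancel and we are left with $\prod_{1\le i\le r-1,\,d_v\nmid i}(N(v)^i-1)$, which is (\ref{eq:310}).
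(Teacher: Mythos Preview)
Your argument is correct and follows the paper's overall structure: reduce from the norm-one groups to the full unit groups via the common $\Gm$-quotient, compute both volumes, and take the ratio. The split-side computation and the final bookkeeping match the paper's Section~\ref{sec:05} exactly.

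The one genuine difference is how you compute the additive volume $\vol_{\mathrm{add}}(R_v)$. The paper does this by the explicit base-change of Proposition~\ref{41}: it embeds $O_{\Delta_v}\otimes_{O_v}O_L$ into $\Mat_{d_v}(O_L)$ as the standard Iwahori order $Iw$, takes the $d_v$-th root of the resulting volume, and reads off $N(v)^{-m_v^2 d_v(d_v-1)/2}$ from the index $[\Mat_{d_v}(O_L):Iw]$. You instead observe that $dx'$, being the pullback of the self-dual measure $dx$ on $\Mat_r$ under an isomorphism preserving the trace form, is self-dual for the reduced trace pairing on $B_v$, and then invoke the discriminant formula $\disc(O_{\Delta_v}/O_v)=\grp_v^{d_v(d_v-1)}$ to get the same number. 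Both are valid; your route is more conceptual and avoids writing down the Iwahori embedding, at the cost of importing the discriminant formula (which is standard, e.g.\ Reiner, but is \emph{not} actually stated in Section~\ref{sec:04} as you suggest --- that section sets up the Iwahori picture instead). A second, minor packaging difference: the paper reduces $R_v^\times$ modulo the uniformizer $\Pi_v$ of $O_{\Delta_v}$ to land directly in $\GL_{m_v}(\F_{N(v)^{d_v}})$, whereas you reduce modulo $\pi_v$ and then peel off the nilpotent radical; the two lead to the same factor $\prod_{j=1}^{m_v}(N(v)^{d_v j}-1)$ after the powers of $N(v)$ are absorbed.
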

The proof of Proposition~\ref{31} will be given in
Section~\ref{sec:05}. By (\ref{eq:38}) and  Proposition~\ref{31},
Theorem~\ref{11} is proved.

\section{Division algebras over local fields}
\label{sec:04}

In this section we make preparation on  
central division algebras over non-Archimedean local fields. This
will be used in the next section. Let $K_v$, $O_v$, $\pi_v$,
$\kappa(v)$, $N(v)$ be as before. 

\subsection{Maximal orders}
\label{sec:41}
Let $\Delta$ 
be a central division algebra of dimension $d^2$ over $K_v$. 
Let $L$ be the unramified field extension of $K_v$ of degree $d$, and
$O_L$ its ring of integers. Let $\sigma$ be the (arithmetic) 
Frobenius automorphism of $L$ over $K_v$. 
Suppose $\inv(\Delta)=b/d$, where $b$ is a positive integer with $(b,d)=1$
and $b<d$. We use the normalization of invariant of $\Delta$ in
Pierce \cite{pierce}; see p.~338 and p.~277. We can write 
\begin{equation}
  \label{eq:41}
  \Delta=L[\Pi'], \quad (\Pi')^d=\pi_v^b, \quad (\Pi')^{-1} c
  \Pi'=\sigma(c), \quad \forall\, c\in L.
\end{equation}
Note that the normalization in Reiner \cite{reiner:mo} is different;
the invariant of $\Delta$ in (\ref{eq:41}) is defined to be $-b/d$
there; see \cite[(31.7), p.~266 and p.~264]{reiner:mo}. 

Choose integers $m$ and $m'$ such that $bm+dm'=1$. We may take $1\le
m\le d$. Put $\Pi:=(\Pi')^m \pi_v^{m'}$. It is easy to check that
\begin{equation}
  \label{eq:42}
  \Pi^d=(\pi_v)^{bm} \pi_v^{dm'}=\pi_v, \quad \Pi^{-1} c
  \Pi=\sigma^m(c),\quad \forall\, c\in L.
\end{equation}
Put $\tau:=\sigma^m$; we have $\Gal(L/K_v)=<\tau>$. The subring
\[ O_\Delta:=O_L[\Pi]\subset \Delta \] 
is the unique maximal order; see \cite[Theorem 13.3,
p.~140 and p.~146]{reiner:mo}. 

We regard $\Delta$ as a right vector space over $L$, with basis 
$1,\Pi, \dots, \Pi^{d-1}$. The left translation of $\Delta$ on 
$\Delta$ gives an embedding 
\[ \Phi: \Delta\to \Mat_d(L) \]
as $K_v$-algebras. From the relation $a_0 \Pi^i=\Pi^i \tau^i (a_0)$ for 
$a_0\in L$, we have 
\begin{equation}
  \label{eq:43}
  \Phi(a_0)=
  \begin{pmatrix}
    a_0 & 0 & \dots & 0 \\
     0  & \tau(a_0) & \dots & 0 \\
    \vdots & \vdots & \ddots  &  0 \\
    0  & 0 &  & \tau^{d-1}(a_0)
  \end{pmatrix}, 
  \quad \Phi(\Pi)=
  \begin{pmatrix}
    0 & 0& \cdots & 0 & \pi_v \\
    1 & 0 & & 0 & 0 \\
    0 & 1 & \ddots & & \vdots \\
    \vdots & \vdots & \ddots & \ddots & \vdots \\
    0 & 0 & \cdots & 1 & 0 \\    
  \end{pmatrix}.
\end{equation}
For example, when $d=3$, we have
\begin{equation}
  \label{eq:44}
  \Phi \left ( a_0+\Pi a_1+\Pi^2 a_2\right )=
  \begin{pmatrix}
    a_0 & \pi_v \tau(a_2) & \pi_v \tau^2(a_1) \\
    a_1 & \tau(a_0)       & \pi_v \tau^2(a_2) \\
    a_2 & \tau(a_1) & \tau^2(a_0) \\
  \end{pmatrix}.
\end{equation}
The map $\Phi:\Delta\to \Mat_d(L)$ induces an isomorphism 
\begin{equation}
  \label{eq:45}
  \Phi_L: \Delta\otimes_{K_v} L \to \Mat_d(L), \quad x\otimes a
  \mapsto \Phi(x)a.
\end{equation}
Let $Iw\subset \Mat_d (O_L)$ be the hereditary order 
\begin{equation}
  \label{eq:46}
  Iw:=\left \{(a_{ij})\in \Mat_d(O_L)\, |\, a_{ij}\in \pi_v O_L,\  
  \forall\, i<j\,\right \}. 
\end{equation}
It is not hard to see that 
\begin{equation}
  \label{eq:47}
  O_\Delta=\left \{a\in \Delta\, |\, \Phi(a)\in Iw\, \right \}, 
  \quad \text{and}\quad \Phi_L(O_\Delta\otimes_{O_v} O_L)=Iw. 
\end{equation}
In other words, the map $\Phi$ is an optimal embedding (also called
a maximal embedding) of $O_\Delta$ into the order
$Iw\subset \Mat_d(L)$.

\subsection{Haar measures and the base change formula}
\label{sec:42}
Let $\{e'_{ij}\}_{1\le i,j\le d}$ be a $K_v$-basis for the vector space
  $\Delta$. For 
any element $x'=\sum_{i,j} x'_{ij} e'_{ij}$ in $\Delta$, write
$x'=(x'_{ij})$ and $x'_{ij}$s are global linear coordinates for
$\Delta$, regarded as a commutative algebraic group over $K_v$. 
The invariant differential form $dx'=\prod_{i,j} dx'_{ij}$
of top degree naturally gives rise to an additive Haar measure on $\Delta$, 
which we also denote $dx'$, by setting 
\[ \vol(B(1), dx')=1, \]
where $B(1):=\{(x'_{ij})\, |\, x'_{ij}\in O_v, \ \forall\, i,j\, \}$
is the unit ball. Let 
\[ d^\times x':=\frac{d x'}{|N_{\Delta/K_v}(x')|_v^{d}}, \]
be the induced Haar measure on $\Delta^\times$, 
where $N_{\Delta/K_v}$ is the reduced norm map and
$|\pi_v|_v=N(v)^{-1}$. Regarding $G'=\Delta^\times$ as an algebraic group
over $K_v$, $d^\times x'$ is also an invariant different form on $G'$
of top degree. 

The differential form $dx'$ is a $K_v$-rational differential form on
$\Delta\otimes L$, regarded as a commutative algebraic group over $L$. 
The induced Haar measure on $\Delta\otimes L$ will
be denoted by $dx'\otimes L$. 

\begin{prop}\label{41} \
\begin{enumerate}
\item For any full $O_v$-lattice $M$ in $\Delta$, we have the base change
  formula
  \begin{equation}
    \label{eq:48}
    \vol(M, dx')^d =\vol(M\otimes_{O_v} O_L, dx'\otimes L). 
  \end{equation}
\item We have 
  \begin{equation}
    \label{eq:49}
     \vol(O_\Delta, dx') =N(v)^{-d(d-1)/2}
     \left [\vol(\Phi_L^{-1}(\Mat_{d}(O_L)), dx'\otimes L)\right ]^{1/d}. 
  \end{equation}
\end{enumerate}
\end{prop}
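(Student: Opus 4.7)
The plan is to reduce both claims to elementary index computations, using in a crucial way that $L/K_v$ is unramified of degree $d$.

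For part (1), I would begin with the ``unit'' lattice $M_0:=\bigoplus_{i,j}O_v e'_{ij}$: by construction $\vol(M_0,dx')=1$, and since $M_0\otimes_{O_v}O_L=\bigoplus_{i,j}O_L e'_{ij}$ is exactly the unit ball used to normalize $dx'\otimes L$, we also have $\vol(M_0\otimes_{O_v}O_L,dx'\otimes L)=1$, so both sides of \eqref{eq:48} equal $1$. For an arbitrary full $O_v$-lattice $M$, the elementary divisor theorem produces $g\in\GL_{d^2}(K_v)$ (acting through the basis $\{e'_{ij}\}$) with $M=gM_0$. The standard transformation rule for Haar measures then gives
\[
\vol(M,dx')=|\det g|_v, \qquad \vol(M\otimes_{O_v}O_L,dx'\otimes L)=|\det g|_L,
\]
the second identity because viewing $g$ as an element of $\GL_{d^2}(L)$ yields $M\otimes_{O_v}O_L=g\cdot(M_0\otimes_{O_v}O_L)$. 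Since $L/K_v$ is unramified of degree $d$, $|\pi_v|_L=N(v)^{-d}$, and hence $|a|_L=|a|_v^d$ for every $a\in K_v^\times$. Applying this to $\det g$ gives \eqref{eq:48}.

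For part (2), I would apply (1) with $M=O_\Delta$, reducing the claim to
\[
\vol(O_\Delta,dx')^d=\vol(O_\Delta\otimes_{O_v}O_L,dx'\otimes L).
\]
By \eqref{eq:47} one has $\Phi_L(O_\Delta\otimes_{O_v}O_L)=Iw$; since $\Phi_L$ is an additive bijection, indices of subgroups are preserved, so
\[
[\Phi_L^{-1}(\Mat_d(O_L)):O_\Delta\otimes_{O_v}O_L]=[\Mat_d(O_L):Iw]=(N(v)^d)^{d(d-1)/2}=N(v)^{d^2(d-1)/2},
\]
the middle equality because $\Mat_d(O_L)/Iw\simeq(O_L/\pi_v O_L)^{d(d-1)/2}$ via the $d(d-1)/2$ strictly-upper-triangular entries, each of which is a residue field of size $N(v)^d$. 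For compact open subgroups of $\Delta\otimes L$ the ratio of Haar measures equals the index, so
\[
\vol(O_\Delta\otimes_{O_v}O_L,dx'\otimes L)=N(v)^{-d^2(d-1)/2}\,\vol(\Phi_L^{-1}(\Mat_d(O_L)),dx'\otimes L).
\]
Substituting into the previous display and taking $d$-th roots yields \eqref{eq:49}.

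The only delicate point is the local-degree identity $|a|_L=|a|_v^d$ for $a\in K_v^\times$ used in part (1); the rest is routine bookkeeping with indices of open-compact subgroups of $\Delta\otimes L$, and I anticipate no genuine obstacle.
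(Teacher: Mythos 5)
Your proof is correct and follows essentially the same route as the paper: for part (2) you apply part (1) to $O_\Delta$, identify $O_\Delta\otimes_{O_v}O_L$ with the Iwahori-type order $Iw$ via $\Phi_L$, compute the index $[\Mat_d(O_L):Iw]=N(v)^{d^2(d-1)/2}$, and convert indices to measure ratios — exactly the paper's chain of equalities. The only difference is in part (1), which the paper dismisses as ``clear''; your spelled-out argument via Smith normal form over $O_v$, the change-of-variables rule $\vol(gM_0)=|\det g|$, and the unramified-degree identity $|a|_L=|a|_v^d$ is precisely the intended justification and is correct.
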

\begin{proof}
  (1) This is clear. (2) It follows from (1) and (\ref{eq:47}) that
  \begin{equation*}
    \begin{split}
     \vol(O_\Delta, dx')&=
  \left [\vol (\Phi_L^{-1}(Iw), dx'\otimes L)\right ]^{1/d}\\ 
  &=\left [ \Mat_d(O_L):Iw\right ]^{-1/d}
  \left [\vol(\Phi_L^{-1}(\Mat_{d}(O_L)), dx'\otimes L)\right ]^{1/d}. 
    \end{split}
  \end{equation*}
Then (\ref{eq:49}) follows from 
\[ \left [\Mat_d(O_L):Iw\right ]=N(v)^{d^2(d-1)/2}. \]  \qed
\end{proof}

\section{Computation of local indices}
\label{sec:05}
In this section we shall give a proof of Proposition~\ref{31}. 
Suppose $B_v\simeq \Mat_{m_v}(\Delta_v)$, where $\Delta_v$ is a
central division algebra of dimension $d_v^2$ over $K_v$. 
We have $r=m_v d_v$.
  
Choose the standard coordinates $x_{ij}$ for $\Mat_r(K_v)$ and form an
invariant differential form $dx:=\prod_{i,j} dx_{ij}$ of top degree on
the commutative algebraic group 
$\Mat_r$ over $K_v$. Let $L$ be the unramified extension of $K_v$ of
degree $d_v$. The $L$-algebra isomorphism $\Phi_L: B_v\otimes_{K_v} L
\to \Mat_r(L)$ constructed in Section~\ref{sec:04} gives an 
isomorphism $\alpha: B_v \to \Mat_r$ of ring schemes over $L$, and
defines also an isomorphism $\alpha: G' \to G$ over $L$. The
pull-back differential form $\alpha^* dx$ is $K_v$-rational and 
there is an invariant differential form $dx'$ of top degree on $B_v$ such
that $dx'\otimes L=\alpha^* dx$. Then the invariant differential forms
\[ dg:=dx/|\det(x)|_v^r, \quad dg':=dx'/|N_{B_v/K_v}(x')|_v^r \]
induce the Haar measures on $G'(K_v)$ and $G(K_v)$ which are
transferred to
each other via the map $\alpha$. 
Choose a Haar measure $dt$ of $K_v^\times$; it defines Haar measures
$dg_1$ on $G_1(K_v)$ and $dg'_1$ on $G'_1(K_v)$ such that 
$dg=dg_1 dt$ and $dg'=dg_1' dt$. Also $dg_1'$ is the transfer of $dg_1$. 
It follows that
\[ \lambda_v=\frac{\vol(G_1(O_v), dg_1)}{\vol(G'_1(O_v), dg'_1)}
     =\frac{\vol(G(O_v), dg)}{\vol(G'(O_v), dg')}.\]
We shall calculate the volumes $\vol(G(O_v), dg)$ and $\vol(G'(O_v),
dg')$. 
From our choice of the Haar measure, we have $\vol(\Mat_r(O_v),
dx)=1$. Therefore, 
\begin{equation}
  \label{eq:51}
  \vol(G(O_v))=\int_{G(O_v)} dg=\int_{G(O_v)} dx=\vol(\Mat_r(O_v),dx)
  \frac{\# \GL_r(\kappa(v))}{\# \Mat_r(\kappa(v))}. 
\end{equation}
It is known that $\# \GL_r(\kappa(v))=N(v)^{r(r-1)/2} 
\prod_{i=1}^r (N(v)^i-1)$, and we get
\begin{equation}
  \label{eq:52}
  \vol(G(O_v))=\frac{\# \GL_r(\kappa(v))}{\# \Mat_r(\kappa(v))}=
  \frac{\prod_{i=1}^r (N(v)^i-1)}{N(v)^{r(r+1)/2}}.
\end{equation}
On the other hand, we have
\begin{equation}
  \label{eq:53}
  \vol(G'(O_v))=\vol(\Mat_{m_v}(O_{\Delta_v}),dx') 
  \frac{\# \GL_{m_v}(O_{\Delta_v}/\Pi_v)}{\#
  \Mat_{m_v}(O_{\Delta_v}/\Pi_v)}. 
\end{equation}
It follows from Proposition~\ref{41} that
\[ \vol(\Mat_{m_v}(O_{\Delta_v}),dx')=N(v)^{-m_v^2 d_v(d_v-1)/2}. \]
Similar to (\ref{eq:52}), we have
\[ \frac{\# \GL_{m_v}(O_{\Delta_v}/\Pi_v)}{\#
  \Mat_{m_v}(O_{\Delta_v}/\Pi_v)}=\frac{\prod_{i=1}^{m_v} (N(v)^{i
  d_v}-1)}{N(v)^{d_v m_v(m_v+1)/2}}. \]
Therefore, we get
\begin{equation}
  \label{eq:54}
  \vol(G'(O_v))=\frac{\prod_{i=1}^{m_v} (N(v)^{i d_v}-1)}{ N(v)^{m_v d_v(m_v
d_v+1)/2}}. 
\end{equation} 
From (\ref{eq:52}) and (\ref{eq:54}), we get
\[ \lambda_v=\prod_{1\le i\le r-1, \atop d_v\nmid i} (N(v)^i-1). \]
This proves Proposition~\ref{31}. 

\section{Alternative approach via zeta functions}\label{sec:06}

This section is an analytic proof for Theorem 1.1. Keep the notation
as in Section~\ref{sec:01} and Subsection~\ref{sec:21}. Particularly 
we have chosen the definite central division algebra $B$ over $K$ of
dimension $r^2$ and a maximal $A$-order $R$ in $B$. 
Fix complete representatives $c_1,\ldots,c_h$ for the double coset space
$G'(K)\backslash G'(\mathbb{A}_K^{\infty})/G'(\widehat{A})$
where $G'$ is the group scheme over $A$ defined as before. 
For $1\leq i \leq h$, let 
$$I_i:= B \cap c_i\widehat{R}\quad \text{ and \ } R_i:= B \cap
c_i\widehat{R}c_i^{-1},$$ 
where $\widehat{R}:= R \otimes_A \widehat{A}$, the pro-finite
completion of $R$. 
Then $I_1,\ldots,I_h$ are complete representatives of right ideal
classes of $R$, 
and $R_i$ is the left order of $I_i$ for each $i$.
The \it inverse \rm of $I_i$ is
$$I_i^{-1} := B \cap \widehat{R}c_i^{-1}.$$
One has $I_i^{-1}\cdot I_i = R$ and $I_i \cdot I_i^{-1} = R_i$.
The units group $R_i^{\times}$ of $R_i$ is equal to $G'(K) \cap
c_iG'(\widehat{A})c_i^{-1}$ and 
$$\text{Mass}(G'(\widehat{A}))=\sum_{i=1}^h\frac{1}{\#(R_i^{\times})}.$$

\subsection{Partial zeta functions}\label{sec:61}

For $1\leq i \leq h$, define the \it partial zeta function \rm
$$\zeta_i(s):= \sum_{I \sim I_i, I \subset R} \frac{1}{|N_{B/K}(I)|^s}.$$
Here $N_{B/K}(I)$ is the fractional ideal of $A$ generated by the
reduced norm $N_{B/K}(\alpha)$ of elements $\alpha$ in $I$,  
and $|\mathfrak{m}|$ is the cardinality of $A/\mathfrak{m}$ for
a non-zero ideal $\mathfrak{m} \subset A$. 
From the definition of $\zeta_i(s)$ one has
$$\zeta_i(s) = \sum_{\text{ideals }\mathfrak{m} \subset A}
\frac{b_{i}(\mathfrak{m})}{|\mathfrak{m}|^s},$$ 
where $b_i(\mathfrak{m}) := \#\{I \subset R: I \sim I_i \text{ with }
N_{B/K}(I) = \mathfrak{m}\}$. 

\begin{prop}\label{61}
The function $\zeta_i(s)$ converges absolutely for $\text{Re}(s)>r$
for all $i=1,\dots, h$
and has a meromorphic continuation to the whole complex plane 
with a simple pole at $s =r$.
Moreover, one has
$$\zeta_i(0) = -\frac{1}{\#(R_i^{\times})}.$$
\end{prop}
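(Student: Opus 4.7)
The plan is to convert the ideal sum into a lattice-point sum on the fractional right ideal $L := I_i^{-1}$, extract rational-function structure from a Poisson/Riemann--Roch count, and then perform a direct cancellation at $s=0$. The map $\alpha\mapsto \alpha I_i$ gives a bijection from $R_i^{\times}\backslash(I_i^{-1}\setminus\{0\})$ onto $\{I\subset R : I\sim I_i\}$, and the $R_i^{\times}$-action is free because $B$ is a division algebra. Using multiplicativity of the reduced-norm ideal together with the product formula $|N_{B/K}(\alpha)A|=|N_{B/K}(\alpha)|_\infty$, I would rewrite
\[
\zeta_i(s)=\frac{|N_{B/K}(I_i)|^{-s}}{\#(R_i^{\times})}\cdot Z_L(s),\qquad Z_L(s):=\sum_{\alpha\in L\setminus\{0\}}|N_{B/K}(\alpha)|_\infty^{-s}.
\]
Because $B$ is definite, $B_\infty$ is a central division algebra of index $r$ with uniformiser $\Pi_\infty$ and residue field $\F_{q_\infty^{r}}$, so $|N_{B/K}(\alpha)|_\infty=q_\infty^{-\ord_{\Pi_\infty}(\alpha)}$, and $Z_L$ organises into shells indexed by $\ord_{\Pi_\infty}(\alpha)$.

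Next, I would set $N_m:=\#(L\cap \Pi_\infty^{-m}O_{B_\infty})$ (including $\alpha=0$) and $a_m:=N_m-N_{m-1}$, so that with $T:=q_\infty^{-s}$ one has $Z_L(s)=\sum_{m}a_m T^{m}$. The essential technical ingredient is a Riemann--Roch-style exact count: since $L$ is a projective rank-$r^2$ $A$-module that is discrete and cocompact in $B_\infty$, Poisson summation applied to the characteristic function of $\Pi_\infty^{-m}O_{B_\infty}$ (whose Fourier transform, with respect to a self-dual additive character of $B_\infty$, is a scalar multiple of the characteristic function of $\Pi_\infty^{m}O_{B_\infty}$) yields an integer $c_L$ and a threshold $m_0$ with
\[
N_m=q_\infty^{mr+c_L}\quad\text{for all }m\ge m_0;
\]
here $q_\infty^{c_L}$ is the inverse covolume of $L$, and $m_0$ is any value so large that the dual lattice $L^{*}$ meets $\Pi_\infty^{m_0}O_{B_\infty}$ only at $0$. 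Consequently $a_m=(1-q_\infty^{-r})q_\infty^{mr+c_L}$ for $m>m_0$, the tail of $Z_L(s)$ is a geometric series converging for $|T|<q_\infty^{-r}$ (that is, $\mathrm{Re}(s)>r$), and $Z_L$ continues meromorphically to a rational function in $T$ with a unique simple pole at $T=q_\infty^{-r}$. Since the prefactor $|N_{B/K}(I_i)|^{-s}$ is entire and nonvanishing, $\zeta_i(s)$ inherits the same simple pole at $s=r$.

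Finally, to compute $\zeta_i(0)$ I would set $T=1$ in the closed form just obtained. The head $\sum_{m\le m_0}a_m$ telescopes to $N_{m_0}-1$, the $-1$ coming from the fact that $\alpha=0$ lies in every $N_m$ but contributes to no $a_m$. The geometric tail, through its rational expression and the identity $(1-q_\infty^{-r})/(1-q_\infty^{r})=-q_\infty^{-r}$, evaluates to $-q_\infty^{m_0 r+c_L}=-N_{m_0}$. The two cancel exactly, leaving $Z_L(0)=-1$; since $|N_{B/K}(I_i)|^{-s}$ equals $1$ at $s=0$, this yields $\zeta_i(0)=-1/\#(R_i^{\times})$.

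The main obstacle is the sharp identity $N_m=q_\infty^{mr+c_L}$: only a precise equality (not a mere asymptotic) produces the exact cancellation at $s=0$, and justifying it requires carefully setting up Pontryagin duality together with a self-dual Haar measure on the noncommutative locally compact group $B_\infty$, and verifying that the covolume of $L$ is indeed an integer power of $q_\infty$. Once this Riemann--Roch-type lattice count is in place, the remainder of the argument reduces to bookkeeping with geometric series.
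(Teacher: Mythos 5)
Your proof is correct and follows essentially the same route as the paper's: convert $\zeta_i$ to a shell count on the lattice $I_i^{-1}$ inside $B_\infty$, invoke a Riemann--Roch input (which is what your Poisson-summation argument amounts to) to get that the shell counts grow geometrically with ratio exactly $q_\infty^r$ past a threshold $m_0$, and obtain $\zeta_i(0)$ from the telescoping-head versus geometric-tail cancellation.

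One correction concerning what you flag as ``the main obstacle'': the integrality of $c_L$ is neither needed nor true in general. Riemann--Roch/Poisson gives only that $N_m/q_\infty^{mr}$ is eventually constant, equal to a positive rational $C = N_{m_0} q_\infty^{-m_0 r}$, and the cancellation $Z_L(0) = (N_{m_0}-1) - N_{m_0} = -1$ goes through for any such $C$; the paper carries its constant $C_i = N_{m_0}$ through without ever identifying it as a power of $q_\infty$. Indeed already for $r=1$, Riemann--Roch yields $N_m = q^{\,m\deg\infty + \deg D + 1 - g}$ for $m$ large, which is a power of $q_\infty = q^{\deg\infty}$ only when $\deg\infty$ divides $\deg D + 1 - g$. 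A small notational slip: the quotient in your bijection should be on the right, $(I_i^{-1}\setminus\{0\})/R_i^\times$, not the left.
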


\begin{proof}
Given a right ideal $I \subset R$ with $I \sim I_i$ and $N_{B/K}(I) =
\frak{m}$. 
There exists a unique $\alpha \in I_i^{-1}$, up to multiplying
elements in $R_i^{\times}$ from the right,  
such that $I = \alpha I_i$ and $N_{B/K}(\alpha)N_{B/K}(I_i) = \frak{m}$.
Hence
$$\#(R_i^{\times}) \cdot b_i(\mathfrak{m}) = \#\{\alpha \in I_i^{-1}:
N_{B/K}(\alpha)N_{B/K}(I_i) = \mathfrak{m} \}.$$ 

Let $\deg$ be the usual degree map on the divisor group
$\text{Div}(K)$ of $K$, i.e.\ 
$\deg v = [\kappa(v):\mathbb{F}_q]$ for any place $v$ of $K$.
Choose the valuation $v_{\infty}$ on $K_{\infty}$ normalized so that
for $a \in K_{\infty}$ 
$$v_{\infty}(a):= \deg \infty \cdot \text{ord}_{\infty}(a),$$
and the valuation $V_{\infty}$ on $B_{\infty}:=\ B\otimes_K K_{\infty}$
(remembering $B_\infty$ is a division algebra) with
$$V_{\infty}(\alpha):= v_{\infty}(N_{B/K}(\alpha))$$
for $\alpha \in B_{\infty}$.
Identifying fractional ideals of $A$ with divisors of $K$ supported
outside $\infty$, 
we get
$$\#(R_i^{\times}) \cdot \zeta_i(s) = \sum_{\ell=0}^{\infty} a_i(\ell)
q^{-\ell s},$$ 
where
\begin{eqnarray}
a_i(\ell) &:=& \#(R_i^{\times}) \cdot \sum_{\text{ideals }\mathfrak{m}
  \subset A, \atop \deg \mathfrak{m} = \ell} b_i(\mathfrak{m})  
\nonumber \\
        & =& \#\{\alpha \in I_i^{-1}: V_{\infty}(\alpha) = -\ell +\deg
  N_{B/K}(I_i)\}. \nonumber 
\end{eqnarray}
Since $V_{\infty}(\alpha) \equiv 0 \bmod \text{deg} \infty$ for all
$\alpha \in B^{\times}$, 
$$a_i(\ell) = 0 \text{ if } -\ell+\deg N_{B/K}(I_i) \not\equiv 0 \bmod
\deg\infty.$$ 

Set $\mathcal{O}_{B_{\infty}}:= \{w \in B_{\infty}: V_{\infty}(w)\geq 0\}$,
the maximal compact subring of $B_{\infty}$.
Fix an element $\Pi_{\infty}$ in $\mathcal{O}_{B_{\infty}}$ with
$V_{\infty}(\Pi_{\infty}) = \deg \infty$. 
From the Riemann-Roch theorem for the function field $K$
\cite[Chapter VI]{weil:bnt} one can deduce that
for a sufficient large integer $n_0$
$$B_{\infty} = I_i^{-1} + \Pi_{\infty}^{-n_0}\mathcal{O}_{B_{\infty}}.$$
Note that for any integer $\mu$ in $\mathbb{Z}$ one has 
$$\#\big(\Pi_{\infty}^{-\mu-1}\mathcal{O}_{B_{\infty}}
\big/\Pi_{\infty}^{-\mu}\mathcal{O}_{B_{\infty}}\big)= N(\infty)^{r}.$$ 
Therefore when $\mu \geq n_0$,
$$ \#\bigg( \frac{
\{\alpha \in I_i^{-1}\,;\,V_{\infty}(\alpha) \geq -(\mu+1)\deg \infty\}}
{\{\alpha \in I_i^{-1}\,;\,V_{\infty}(\alpha) \geq -\mu \deg \infty\}} 
\bigg) = N(\infty)^r.$$ 

Choose $n_0$ large enough so that $n_0 \deg \infty \geq -\deg
N_{B/K}(I_i)$ and set 
$$\ell_i:= n_0 \deg \infty + \deg N_{B/K}(I_i) $$
and  
$$ C_i:= \#\{\alpha \in I_i^{-1}:V_{\infty}(\alpha)\geq -n_0 \deg
\infty\}.$$ 
From the definition of $a_i$ one has that
for any positive integer $\mu$
$$a_i(\ell_i+\mu \deg \infty)= \#\{\alpha \in
I_i^{-1}:V_{\infty}(\alpha) = -(n_0+\mu)\deg\infty\}.$$ 
By induction on $\mu$ we obtain
$$a_i(\ell_i+\mu \deg \infty) =
(N(\infty)^{r}-1)N(\infty)^{r(\mu-1)}C_i.$$ 
Therefore
\begin{eqnarray}
\#(R_i^{\times})\cdot \zeta_i(s) &=& \sum_{\ell=0}^{\ell_i} a_i(\ell)
q^{-\ell s} +  
q^{-\ell_is}\sum_{\mu=1}^{\infty}a_i(\ell_i+\mu \deg \infty)
N(\infty)^{-\mu s} \nonumber  \\ 
                   &=& \sum_{\ell=0}^{\ell_i} a_i(\ell) q^{-\ell s} +
                   C_i \cdot q^{-\ell_is}\cdot
                   \frac{N(\infty)^{r}-1}{N(\infty)^{r}} 
                   \cdot \sum_{\mu = 1}^{\infty}
                   N(\infty)^{-\mu(s-r)}. \nonumber 
\end{eqnarray}
This shows that $\zeta_i(s)$ converges absolutely for $\text{Re}(s)>r$
with a simple pole at $s=r$, 
and the meromorphic continuation of $\zeta_i$ is:
$$\#(R_i^{\times}) \cdot \zeta_i(s)=
\sum_{\ell=0}^{\ell_i}a_i(\ell)q^{-\ell s}  
+ C_i\cdot q^{-\ell_is} \cdot
\frac{N(\infty)^{r}-1}{N(\infty)^{r}}\cdot
\frac{N(\infty)^{(r-s)}}{1-N(\infty)^{(r-s)}}.$$ 

From the definition of $C_i$ one has $C_i = 1 +
\sum\limits_{\ell=0}^{\ell_i}a_i(\ell)$. 
Hence 
$$\zeta_i(0) = -\frac{1}{\#(R_i^{\times})}.$$ \qed

\end{proof}

\subsection{Mass formula}

Define the \it zeta function for the maximal order \rm $R$:
$$\zeta_{R}(s):= \sum_{ \text{right ideals } I \subset R}
\frac{1}{|N_{B/K}(I)|^s} = \sum_{i=1}^h \zeta_i(s).$$ 
Then $\zeta_{R}(s)$ also has meromorphic continuation and by
Proposition~\ref{61} we have 
$$\zeta_{R}(0) = -\text{Mass}(G'(\widehat{A})).$$

Recall that for each place $v$ of $K$, $B_v:=B \otimes_K K_v$ is
isomorphic to $\text{Mat}_{m_v}(\Delta_v)$, 
where $\Delta_v$ is a central division algebra over $K_v$ with
$\text{dim}_{K_v}\Delta_v = d_v^2$ 
and $m_vd_v = r$.
Then $\zeta_R(s)$ can be expressed by the Dedekind zeta function
$\zeta_K$ of $K$ in the following: 

\begin{thm}\label{62} We have
$$\zeta_{R}(s) = (1-N(\infty)^{-s}) \zeta_K(s) \cdot
\prod_{i=1}^{r-1}\zeta_K(s-i) \cdot  
\prod_{v \in S} \left(\prod_{1\leq i \leq r-1, \atop d_v \nmid
    i}(1-N(v)^{i-s})\right),$$ 
where $S$ is the finite set of ramified places for $B$.
\end{thm}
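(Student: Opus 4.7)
The plan is to show that $\zeta_R(s)$ factors as an Euler product over the finite places of $K$ and then to compute each local factor separately. Given a right ideal $I \subset R$, its completions $I_v = I \otimes_A A_v \subset R_v$ satisfy $I_v = R_v$ for almost all $v$, and $I$ is uniquely reconstructed from $(I_v)_v$. Since the reduced norm is a local construction, $|N_{B/K}(I)| = \prod_v |N_{B_v/K_v}(I_v)|_v$, so for $\text{Re}(s) > r$ (convergence having already been established in Proposition~\ref{61}),
\[ \zeta_R(s) = \prod_{v \ \text{finite}} \zeta_{R_v}(s), \qquad \zeta_{R_v}(s) := \sum_{I_v \subset R_v} |N_{B_v/K_v}(I_v)|_v^{-s}. \]

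Next I would compute each $\zeta_{R_v}(s)$. At any finite place $v$, write $R_v \cong \Mat_{m_v}(O_{\Delta_v})$, where $O_{\Delta_v}$ is the unique maximal order of $\Delta_v$ from Section~\ref{sec:04} (understood as $O_v$ when $v \notin S$, so $d_v = 1$ and $m_v = r$). A Morita-type correspondence identifies the right ideals of $R_v$ contained in $R_v$ with the $O_{\Delta_v}$-submodules $L \subset O_{\Delta_v}^{m_v}$. Since $O_{\Delta_v}$ is a principal ideal ring with unique maximal two-sided ideal $\Pi_v O_{\Delta_v}$, Smith normal form applies and each such $L$ has well-defined elementary divisors $(\Pi_v^{a_1}, \ldots, \Pi_v^{a_{m_v}})$. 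I would then verify that the corresponding ideal $I_L$ satisfies $|N_{B_v/K_v}(I_L)|_v = N(v)^{a_1 + \cdots + a_{m_v}}$, using $N_{\Delta_v/K_v}(\Pi_v) = \pi_v$ up to a unit. Enumerating sublattices of $O_{\Delta_v}^{m_v}$ of a fixed elementary-divisor type proceeds as in the commutative case, but with residue field $O_{\Delta_v}/\Pi_v$ of size $N(v)^{d_v}$; this yields
\[ \zeta_{R_v}(s) = \prod_{i=0}^{m_v - 1} \bigl(1 - N(v)^{d_v i - s}\bigr)^{-1}, \]
which specializes to $\prod_{i=0}^{r-1}(1 - N(v)^{i-s})^{-1}$ when $v \notin S$.

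Finally I would reassemble the global Euler product and compare with the right hand side place by place. The unramified local factors already build up $\prod_{i=0}^{r-1}\zeta_K(s-i)$ with its Euler factors at $S \cup \{\infty\}$ removed. At $v = \infty$ we have $d_\infty = r$ and $m_\infty = 1$, and $d_\infty \nmid i$ for every $1 \le i \le r-1$; the missing Euler factor $\prod_{i=0}^{r-1}(1-N(\infty)^{i-s})^{-1}$ at $\infty$ is exactly cancelled by the prefactor $(1-N(\infty)^{-s})$ together with the $v=\infty$ term of $\prod_{v \in S}$. For each finite $v \in S$, restoring the $\zeta_K(s-i)$ Euler factors at $v$ and multiplying by $\prod_{1 \le i \le r-1,\, d_v \nmid i}(1-N(v)^{i-s})$ collapses to $\prod_{j=0}^{m_v - 1}(1-N(v)^{d_v j - s})^{-1}$, matching the local $\zeta_{R_v}(s)$ above.

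The main obstacle is the local computation at ramified finite places: establishing the Morita-type bijection between right ideals of $R_v$ and sublattices of $O_{\Delta_v}^{m_v}$ in the noncommutative setting, computing the reduced norm of $I_L$ correctly from its elementary-divisor invariants, and verifying that the sublattice count is governed by the classical formula with $q$ replaced by $N(v)^{d_v}$. Once these local identities are in hand, the global comparison is formal bookkeeping.
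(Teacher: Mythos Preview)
Your proposal is correct and follows essentially the same route as the paper: factor $\zeta_R(s)$ as an Euler product over the finite places, then compute each local factor to be $\prod_{i=0}^{m_v-1}(1-N(v)^{d_v i - s})^{-1}$ and reassemble. The only cosmetic difference is in the local step---the paper parametrizes the right ideals of $R_v$ directly via the Iwasawa (Hermite normal form) decomposition of $\GL_{m_v}(\Delta_v)$, whereas you pass through Morita to sublattices of $O_{\Delta_v}^{m_v}$ and count by elementary divisors; these are two equivalent bookkeeping devices for the same coset enumeration.
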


Let $\zeta_A(s):= (1-N(\infty)^{-s})\zeta_K(s)$, the zeta function for
$A$. Then  
$$\zeta_A(0) = -\frac{\#\text{Pic}(A)}{q-1}$$
where $\text{Pic}(A)$ is the ideal class group of $A$.
Therefore the above theorem tells us

\begin{cor}
\text{\rm (Mass formula)} We have
\begin{eqnarray}
\text{\rm Mass}(G'(\widehat{A})) &=&
\frac{\#\text{\rm Pic}(A)}{q-1} \cdot \prod_{i=1}^{r-1}\zeta_K(-i) \cdot 
\prod_{v \in S} \left(\prod_{1\leq i \leq r-1, \atop d_v \nmid
    i}(1-N(v)^{i})\right) \nonumber \\ 
 &=& \frac{\#\text{\rm Pic}(A)}{q-1} \cdot
 \prod_{i=1}^{r-1}\zeta_K(-i) \cdot  
\prod_{v \in S} \left(\prod_{1\leq i \leq r-1, \atop d_v \nmid
    i}(N(v)^{i}-1)\right). \nonumber 
\end{eqnarray}
\end{cor}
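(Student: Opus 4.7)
The plan is to deduce the Mass formula from Theorem~\ref{62} by evaluating both sides at $s=0$. Three ingredients are required: (a) the identification $\zeta_R(0) = -\Mass(G'(\wh A))$, supplied by Proposition~\ref{61}; (b) the evaluation $\zeta_A(0) = -\#\Pic(A)/(q-1)$; and (c) regularity at $s=0$ of the remaining factors on the right-hand side of Theorem~\ref{62}.

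For (a), writing $\zeta_R(s) = \sum_{i=1}^h \zeta_i(s)$ as a finite sum and applying Proposition~\ref{61} (each $\zeta_i$ is meromorphic on $\bbC$ with $\zeta_i(0) = -1/\#(R_i^\times)$), one obtains
\[
\zeta_R(0) \;=\; -\sum_{i=1}^h \frac{1}{\#(R_i^\times)} \;=\; -\Mass(G'(\wh A)).
\]

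For (b) and (c), I would regroup the first two factors on the right of Theorem~\ref{62} as $\zeta_A(s) := (1-N(\infty)^{-s})\zeta_K(s)$. Since $\zeta_K$ has simple poles only at $s=0,1$, every factor $\zeta_K(s-i)$ for $1 \le i \le r-1$ is holomorphic at $s=0$, and the local factors $1 - N(v)^{i-s}$ at $v \in S$ are plainly regular there. The simple pole of $\zeta_K(s)$ at $s=0$ is cancelled by the simple zero of $1 - N(\infty)^{-s} = 1 - q^{-s\deg\infty}$, so $\zeta_A$ is holomorphic at $s=0$. Using the factorization $\zeta_K(s) = P(q^{-s})/[(1-q^{-s})(1-q^{1-s})]$ recalled near (\ref{eq:21}) together with $\lim_{s\to 0}(1 - q^{-s\deg\infty})/(1 - q^{-s}) = \deg\infty$ (L'H\^opital), I would obtain
\[
\zeta_A(0) \;=\; \frac{\deg\infty \cdot P(1)}{1-q} \;=\; -\frac{h(A)}{q-1} \;=\; -\frac{\#\Pic(A)}{q-1},
\]
invoking (\ref{eq:21}) in the last equality.

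Substituting these evaluations into the factorization of Theorem~\ref{62} at $s=0$ then yields
\[
\Mass(G'(\wh A)) \;=\; \frac{\#\Pic(A)}{q-1} \cdot \prod_{i=1}^{r-1}\zeta_K(-i) \cdot \prod_{v \in S}\prod_{\substack{1 \le i \le r-1 \\ d_v \nmid i}}(1 - N(v)^i),
\]
which is precisely the first form of the corollary. The second form, with factors $N(v)^i - 1$, is then a sign rearrangement: since both sides equal the positive rational $\Mass(G'(\wh A))$, the global sign $(-1)^{\sum_{v\in S} m_v(d_v-1)}$ absorbs consistently with the sign of $\prod_{i=1}^{r-1}\zeta_K(-i)$ (one can cross-check against Theorem~\ref{11}, where the same mass is computed with $(N(v)^i-1)$-factors). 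I anticipate no serious obstacle: the analytic heavy lifting (meromorphic continuation and the Euler product identity) is already packed into Proposition~\ref{61} and Theorem~\ref{62}, leaving only this formal plug-in, together with the one nontrivial limit computing $\zeta_A(0)$ via (\ref{eq:21}).
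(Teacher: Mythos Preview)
Your derivation of the first equality is correct and, in fact, more explicit than the paper's: you spell out why $\zeta_A(s)$ is regular at $s=0$ and compute $\zeta_A(0)=-\#\Pic(A)/(q-1)$ via (\ref{eq:21}), whereas the paper simply asserts this value. So for the first line of the corollary your approach is essentially the paper's, with added detail.

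The second equality is where you have a genuine gap. The two displayed products differ by the sign $(-1)^{\sum_{v\in S}(r-m_v)}$ (for each $v\in S$ there are exactly $r-m_v$ indices $i$ in $\{1,\dots,r-1\}$ with $d_v\nmid i$). Your proposal does not establish that this exponent is even: you either invoke positivity of both sides (which begs the question for the second side) or ``cross-check against Theorem~\ref{11}'', which is circular in the context of Section~\ref{sec:06}, whose purpose is to give a proof independent of the Tamagawa-measure computation. The sign of $\prod_{i=1}^{r-1}\zeta_K(-i)$ does not help either, since it appears identically on both sides.

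The paper closes this gap with a direct parity argument based on the reciprocity law $\sum_{v\in S}\inv_v(B)\equiv 0\pmod{\bbZ}$. Writing $\inv(\Delta_v)=b_v/d_v$ with $\gcd(b_v,d_v)=1$, one observes: if $r$ is odd, each $d_v$ is odd and $r-m_v=m_v(d_v-1)$ is even; if $r$ is even, letting $S_1=\{v\in S:m_v\text{ odd}\}$, for $v\in S_1$ one has $d_v$ even hence $b_v$ odd, and the reciprocity then forces $\sum_{v\in S_1} m_v b_v\equiv 0\pmod 2$, so $|S_1|$ is even. In either case $\sum_{v\in S}(r-m_v)$ is even, yielding the second equality without appeal to the first proof.
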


\begin{proof}
The first equality just follows from Theorem~\ref{62}.
Now, for each place $v\in S$,
suppose $\text{inv}(\Delta_v) = b_v/d_v$.
Since $\Delta_v$ is a division ring, the integers $b_v$ and $d_v$ 
are relatively prime. It is well-known that 
\begin{equation}
  \label{eq:61}
  \sum_{v \in S} b_v/d_v \equiv 0 \quad (\bmod\ \mathbb{Z}).
\end{equation}
It follows that 
\begin{equation}
  \label{eq:62}
\sum\limits_{v \in S} r-m_v \equiv 0 \quad (\bmod\ 2).  
\end{equation}
Indeed, if $r$ is odd,
then each term $r-m_v$ is even. Suppose $r$ is even. Let $S_1\subset
S$ be the subset consisting of places $v$ such that $m_v$ is odd. For
each $v\in S_1$, the integer $d_v$ is even and hence $b_v$ is
odd. Since $r$ is even it follows from (\ref{eq:61}) that 
\[  \sum\limits_{v \in S_1} m_v b_v \equiv 0 \quad (\bmod \ 2), \]  
and hence $|S_1|$ is even. 

The second equality follows from (\ref{eq:62}). \qed
\end{proof}

\noindent \text{\it Proof of Theorem~\ref{62}.}
Write $\zeta_{R}(s)$ as $$\sum_{\text{ideals }\mathfrak{m} \subset A}
\frac{b(\mathfrak{m})}{|\mathfrak{m}|^s}$$ 
where  $b(\mathfrak{m}):= \sum_{i=1}^h b_{i}(\mathfrak{m}) =
\#\{\text{right ideals } I \subset R: N_{B/K}(I) = \mathfrak{m}\}$. 
Recall the following bijection
$$\begin{tabular}{ccc}
$G'(\mathbb{A}_K^{\infty})/G'(\widehat{A})$ & $\cong$ & $\{\text{right
  fractional ideals of $R$}\}$ \\ 
 $cG'(\widehat{A})$ & $\longmapsto$ & $B \cap c\widehat{R}$.
\end{tabular}$$
The counting number $b(\frak{m})$ is equal to the number of cosets
$cG'(\widehat{A})$ such that 
$c \in \widehat{R}$ and the coset
$K \cap N_{B/K}(c)\widehat{A} = \frak{m}$. 
Write $c$ as the form $(c_v)_{v \neq \infty}$ where $c_v \in G'(K_v)$.
Then for each finite place $v$ of $K$,
$$N_{B/K}(c_v)\cdot O_v = \frak{m}_v \cdot O_v$$
where $\frak{m}_v \subset A$ is the $v$-component of $\frak{m}$. 
Therefore $$b(\mathfrak{m}) = \prod_{v \neq \infty} b(\mathfrak{m}_v).$$
Let $\frak{p}_v$ denote the ideal of $A$ corresponding to 
the finite place $v$.
Then 
$$\zeta_{B}(s) = \prod_{v \neq \infty} 
\left(\sum_{\ell=0}^{\infty}\frac{b(\mathfrak{p}_v^{\ell})}{N(v)^{\ell
      s}}\right).$$ 

Let $O_{\Delta_v}$ be the maximal compact subring in $\Delta_v$ and
we fix an isomorphism $\varphi_v: B_v \rightarrow
\text{Mat}_{m_v}(\Delta_v)$ such that  
$$R_v:= R\otimes_A O_v = \varphi_v^{-1}(\text{Mat}_{m_v}(O_{\Delta_v})).$$
Choose a generator $\Pi_v$ of the maximal ideal in $O_{\Delta_v}$.
As in the case when $\Delta_v$ is a field,
we have the \lq\lq Iwasawa\rq\rq\ decomposition for the units group
$\text{GL}_{m_v}(\Delta_v)$ 
of $\text{Mat}_{m_v}(\Delta_v)$,
i.e.\ every element in $\text{GL}_{m_v}(\Delta_v)$ can be written as the form
$$\begin{pmatrix}
    \Pi_v^{\ell_1} & u_{12} & \cdots & u_{1m_v}\\
    0 & \Pi_v^{\ell_2}  & \cdots &u_{2m_v}\\
    \vdots & \ddots & \ddots & \vdots \\
    0 & \cdots & 0 & \Pi_v^{\ell_{m_v}}
    \end{pmatrix} \cdot U,$$
where $\ell_1,\ldots,\ell_{m_v} \in \mathbb{Z}$, $u_{ij} \in \Delta_v$
for $1\leq i<j \leq m_v$, and  
the element $U$ is in $\text{GL}_{m_v}(O_{\Delta_v})$.
So for $\ell \geq 0$, $b(\mathfrak{p}_v^{\ell})$ is equal to the
number of representatives of the form 
$$\begin{pmatrix}
    \Pi_v^{\ell_1} & u_{12} & \cdots & u_{1m_v}\\
    0 & \Pi_v^{\ell_2}  & \cdots &u_{2m_v}\\
    \vdots & \ddots & \ddots & \vdots \\
    0 & \cdots & 0 & \Pi_v^{\ell_{m_v}}
    \end{pmatrix},$$
where $\sum_{i=1}^{m_v} \ell_i = \ell$, $\ell_i \geq 0$,
and $u_{ij} \in O_{\Delta_v}/\Pi_v^{\ell_i}O_{\Delta_v}$
for $1\leq i < j \leq m_v$.
This gives
$$b(\mathfrak{p}_v^{\ell}) = \sum_{\ell_1+\cdots + \ell_{m_v} = \ell
  \atop \ell_i \geq 0} 
  \bigg( \prod_{i=1}^{m_v} N(v)^{d_v \ell_i (m_v-i)}\bigg)
= \sum_{\ell_1+\cdots + \ell_{m_v} = \ell \atop \ell_i \geq 0}
  \bigg( \prod_{i=1}^{m_v} N(v)^{d_v \ell_i (i-1)}\bigg).$$
Hence
\begin{eqnarray}
\sum_{\ell=0}^{\infty}\frac{b(\mathfrak{p}_v^{\ell})}{N(v)^{\ell s}}
&=& \sum_{\ell=0}^{\infty}N(v)^{-\ell s}\left(\sum_{\ell_1+\cdots +
    \ell_{m_v} = \ell \atop \ell_i \geq 0} 
  \bigg( \prod_{i=1}^{m_v} N(v)^{d_v \ell_i (i-1)}\bigg)\right)
  \nonumber \\ 
  &=& \prod_{i=1}^{m_v}\left( \sum_{\ell_i
  =0}^{\infty}N(v)^{(i-1)d_v\ell_i-\ell_i s}\right) \nonumber \\ 
&=& \prod_{i=1}^{m_v}\frac{1}{(1-N(v)^{(i-1)d_v-s})}. \nonumber
\end{eqnarray}

\begin{thank}
  The authors would like to thank E.-U.~Gekeler and 
  Jing Yu for their steady interest and
  encouragements, A.~Schweizer for pointing out the
  reference \cite{denert-geel:classno88}, and the previous referees 
  for helpful comments. The authors were partially supported by the
  grants NSC 99-2119-M-002-007, NSC 97-2115-M-001-015-MY3 and
  AS-99-CDA-M01. 
\end{thank}

\end{document}